\newtheorem{theorem}{Theorem}[section]
\newtheorem{lemma}[theorem]{Lemma}
\newtheorem{corollary}[theorem]{Corollary}
\theoremstyle{definition}
\theoremstyle{remark}
\newtheorem{remark}[theorem]{Remark}
\numberwithin{equation}{section}
     \definecolor{red}{rgb}{0.9,0,0}
     \definecolor{green}{rgb}{0,0.6,0}
     \definecolor{rb}{rgb}{0.6,0,0.2}     
     \definecolor{pass}{rgb}{0,0,0.7}
\newcommand{\pt}{\partial}
\newcommand{\R}{\mathbb{R}}
\newcommand{\N}{\mathbb{N}}
\newcommand{\LL}{{\mathcal L}}
\newcommand{\MM}{{\mathcal M}}
\newcommand {\beq} {\begin{equation}}
\newcommand {\eeq} {\end{equation}}
\newcommand {\beqa} {\begin{eqnarray}}
\newcommand {\eeqa} {\end{eqnarray}}
\newcommand {\beqann} {\begin{eqnarray*}}
\newcommand {\eeqann} {\end{eqnarray*}}
\begin{document}

\title[Logarithm in maximum norm error bounds for 3d linear elements]%
{
Logarithm cannot be removed in maximum norm\\ error estimates for linear finite elements in 3D}

\author{Natalia Kopteva}
\address{Department of Mathematics and Statistics,
University of Limerick, Limerick, Ireland}
\email{natalia.kopteva@ul.ie}


\subjclass{Primary 65N15, 65N30; Secondary 65N06}
%

\date{}

\keywords{linear finite elements,
maximum norm error estimate, logarithmic factor}

\begin{abstract}
For linear finite element discretizations of the Laplace equation in three dimensions,
we give an example of a tetrahedral mesh in the cubic domain for which the logarithmic factor cannot be
removed from the standard 
upper bounds on the error in the maximum norm.
\end{abstract}

\maketitle

\section{Introduction}
Consider
the problem
\beq\label{prob}
-\triangle u=f\quad\mbox{in\;\;}\Omega\subset\R^d,\qquad u=0 \quad\mbox{on\;\;}\pt\Omega,
\eeq
where $d\in\{2,3\}$, and
$\triangle$ is the  Laplace operator
(defined by $\triangle=\pt_x^2+\pt_y^2+\pt_z^2$ for $d=3$).
It is well known that
a standard linear finite element approximation $u_h$ for~\eqref{prob}
on a quasi-uniform mesh of diameter $h$
is quasi-optimal in the sense that
\beq\label{Ritz_er_Linf}
\|u-u_h\|_{L_\infty(\Omega)}\lesssim h^2|\ln h|\,\| u\|_{W^2_\infty(\Omega)},
\eeq
see, e.g.,
\cite[Theorem~2]{Sch_80},
\cite[Theorem~3.1]{Leyk_Vexler_sinum16} and \cite[Theorem~5.1]{SchWa_best},
for, respectively,
polygonal, convex polyhedral and smooth domains.
It was also shown in \cite{haverkamp,BLRan86,Andreev89_proc}  that when $d=2$, the logarithmic factor cannot be removed from \eqref{prob}.
Surprisingly, it appears that this is still an open question for $d=3$.
In this note, we address  this by giving an example of a tetrahedral  mesh
in the cubic domain $\Omega=(0,1)^3$
such that, under certain conditions on $f$, one has $\|u-u_h\|_{L_\infty(\Omega)}\ge C^*h^2|\ln h|$,
where a positive constant $C^*$ depends on $f$, but not on $h$.

Our mesh is constructed using the two-dimensional triangulation used in a less known paper~\cite{Andreev89_proc}.
Furthermore, the general approach here also follows \cite{Andreev89_proc} in that, on a particular mesh,
we estimate the error of a finite element method
using its explicit finite difference representation.

The paper is organized as follows. In \S\ref{sec_1} we describe the mesh used in our example,
give a finite difference representation of the resulting finite element method, and state the main results. Most of the proofs are deferred to \S\ref{sec_2}.
\smallskip

{\it Notation.}
We write
$a \lesssim b$ when $a \le Cb$ with a generic constant $C$ depending on $\Omega$ 
and $f$,
but not on $h$.
For any domain $\mathcal{D}$ in $\R^2$ or $\R^3$,
the notation $C^{k,\alpha}(\bar{\mathcal{D}})$
is used for the standard H\"{o}lder space
consisting of functions whose $k$th order partial derivatives are uniformly H\"{o}lder continuous with exponent $\alpha\in(0,1)$.
We also use the standard space $L_\infty(\mathcal{D})$ and the related  Sobolev space $W_\infty^2(\mathcal{D})$.


\section{Mesh description. Main results}\label{sec_1}
Consider the standard linear finite element method with lumped-mass quadrature
(for the case without quadrature, see Corollary~\ref{cor}).
With the standard finite element space $S_h$ of continuous piecewise-linear functions
vanishing on $\pt\Omega$,
the computed solution $u_h\in S_h$
is required to satisfy
\beq\label{fem}
\int_\Omega \nabla u_h\cdot \nabla v_h
=\int_{\Omega} (fv_h)^I
\qquad\forall v_h\in S_h,
\eeq
where
$v^I\in S_h$, for any $v\in C(\bar\Omega)$, denotes its  standard piecewise-linear Lagrange interpolant.

Let $\Omega:=(0,1)^3$ and define a tetrahedral mesh in $\Omega$ as follows.
With an even integer $N$, set $h:=N^{-1}$.
Starting with the uniform rectangular grid $\{(x_i,y_j)=(ih,jh)\}_{i,j=0}^N$, 
define a triangulation of $(0,1)^2$
by drawing diagonals as on Fig.\,\ref{fig1} (left) \cite{Andreev89_proc}.
Note that each interior node is shared by 6 triangles, except for $(\frac12,\frac12)$, which is shared by 4 triangles.

Next, partition  $\Omega$ into triangular prisms by constructing a tensor product of the two-dimensional triangulation in the
$(x,y)$-plane and the uniform grid $\{z_k=kh\}$ in the $z$-direction.
Finally, divide each triangular prism into three tetrahedra as on Fig.\,\ref{fig1} (centre, right) using method~A or method~B.
Note that for the resulting tetrahedral mesh to be well-defined, no prisms of the same type can share a vertical
face.
Such a well-defined mesh is generated if, for example,
the shadowed triangles on Fig.\,\ref{fig1} (left) correspond to method B of prism partition.
Note also that it will be convenient to
evaluate the local contributions to \eqref{fem} associated with triangular prisms,
the set of which (rather than of all tetrahedra)
will be denoted  $\mathcal T$.

Introduce the notation
$U_{ijk}:=u_h(x_i,y_j,z_k)$ for nodal values of $u_h$ and the standard finite difference operators defined, for $t=x,y,z$, by
$$
\delta_t^2 U_{ijk}:=
\frac{u_h(P_{ijk}+h\mathbf{i}_t)-2u_h(P_{ijk})+u_h(P_{ijk}-h\mathbf{i}_t)}{h^2},\qquad P_{ijk}:=(x_i,y_j,z_k),
$$
where $\mathbf{i}_t$ is the unit vector in the $t$-direction.
Now, we claim that the finite element method~\eqref{fem} can be rewritten, for $i,j,k=1,\ldots, N-1$, as
\begin{subequations}\label{fdm_full}
\begin{align}\label{fdm}
\LL^h U_{ijk}&:=-(\delta^2_x +\delta^2_y +\gamma_{ij}\delta^2_z) U_{ijk}
=
\gamma_{ij}\,f(x_i,y_j,z_k),
\\[3pt]\label{gamma}
&\mbox{where}\quad
\gamma_{ij}:=\left\{\begin{array}{cl}
\frac23,& \mbox{if}\;i=j=\frac{N}2,\\
1,& \mbox{otherwise},
\end{array}\right.
\end{align}
\end{subequations}
subject to $U_{ijk}=0$ for any $(x_i,y_j,z_k)\in\pt\Omega$.

Indeed, for a particular prism $T\in\mathcal T$, using the notation $E_{lm}$ for the edge connecting vertices $l$ and $m$
(see Fig.\,\ref{fig1}),
and assuming that $E_{23}$ is parallel to the $x$-axis,
a calculation shows that
\begin{align}\label{local}
\int_{T}\nabla u_h\cdot &\nabla v_h
=
\textstyle\frac13 |T|\Bigl\{
2(\pt_x u_h \,\pt_x v_h)\bigr|_{E_{23}}
+
(\pt_x u_h \,\pt_x v_h)\bigr|_{E_{2'3'}}
\\\notag
&{}+ (\pt_y u_h \,\pt_y v_h)\bigr|_{E_{12}}+ 2(\pt_y u_h \,\pt_y v_h)\bigr|_{E_{1'2'}}
+\sum_{l=1}^3(\pt_z u_h \,\pt_z v_h)\bigr|_{E_{ll'}}
\Bigr\}.
\end{align}
Here
we used the fact that each tetrahedron has
an edge parallel to each of the coordinate axes.
Also, within the prism $T$, each of such edges belongs to exactly 1 tetrahedron,
except for $E_{23}$ and $E_{1'2'}$, while each of the latter is shared  by 2 tetrahedra.

Next, set $v_h:=\phi_{ijk}$ in \eqref{fem}, where $\phi_{ijk}\in S_h$ equals $1$ at $(x_i,y_j,z_k)$ and vanishes at all other mesh nodes.
With this $v_h$, adding the contributions of~\eqref{local} to the left-hand side of \eqref{fem}, one gets
$$
\textstyle\frac13 |T|\Bigl\{
-6\delta^2_x U_{ijk}-6\delta^2_y U_{ijk}-6\gamma_{ij}\delta^2_z U_{ijk}\Bigr\}
=\textstyle\frac1{12} |T|\Bigl\{
24\gamma_{ij}\,f(x_i,y_j,z_k)\Bigr\}.
$$
For the right-hand side here, we used the observation that each node $(x_i,y_j,z_k)$ is shared by
$24\gamma_{ij}$ tetrahedra.
Clearly, the above relation immediately implies \eqref{fdm}.

Now that our finite element method \eqref{fem} is represented as a finite difference scheme \eqref{fdm_full},
note that if $\gamma_{ij}$ were equal to $1$ for all $i,j$, we would immediately get the standard finite-difference
error bound $\|u-u_h\|_{L_\infty(\Omega)}\lesssim h^2$ \cite{Sam}. However, $\gamma_{\frac{N}2,\frac{N}2}=\frac23\neq 1$
results in a slightly worse convergence rate, consistent with \eqref{Ritz_er_Linf}.

\begin{theorem}\label{theo_main}
\label{theo_main}
Let $\Omega=(0,1)^3$ and
$f:= F(x,y)\,\sin(\pi z)$ in \eqref{prob}, with any
$F\in C^{2,\alpha}([0,1]^2)$ subject to $F=0$ at the corners of $(0,1)^2$, and
 $F(\frac12,\frac12)=\|F\|_\infty>0$
 (where $\|\cdot\|_\infty$ is used for the norm in $L_\infty((0,1)^2)$). Then
 $u\in C^{2,\alpha}(\bar\Omega)\subset W^2_\infty(\Omega)$, and
 there exists
 a positive constant $C^*$ depending on $F$, but not on $h$,
 such that for
 the finite element approximation $u_h$ obtained
 using \eqref{fem} on the above tetrahedral mesh with a sufficiently small $h$, one has
 $\|u-u_h\|_{L_\infty(\Omega)}\ge C^*h^2|\ln h|$.
\end{theorem}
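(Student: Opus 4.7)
The plan is to exploit the product structure $f = F(x,y)\sin(\pi z)$ to reduce the three-dimensional problem to a two-dimensional one by separation of variables, and then to extract the logarithmic factor from the well-known diagonal singularity of the two-dimensional discrete Green's function.

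By separation of variables, $u(x,y,z) = V(x,y)\sin(\pi z)$, where $V$ solves $-\Delta V + \pi^2 V = F$ in $(0,1)^2$ with $V=0$ on $\pt(0,1)^2$; the hypotheses on $F$ (in particular, $F=0$ at the corners, which is the compatibility needed for corner regularity) give $V\in C^{2,\alpha}([0,1]^2)$ by elliptic regularity on the square, and hence $u \in C^{2,\alpha}(\bar\Omega)\subset W^2_\infty(\Omega)$. For the discrete system, I postulate the separated ansatz $U_{ijk} = W_{ij}\sin(\pi z_k)$; using the identity $\delta_z^2\sin(\pi z_k) = -\mu_h^2 \sin(\pi z_k)$ with $\mu_h^2:=2(1-\cos\pi h)/h^2 = \pi^2+O(h^2)$ and substituting into \eqref{fdm_full}, one checks that this ansatz solves the 3D system if and only if $W$ satisfies the 2D scheme
\[ \bigl(-\delta_x^2-\delta_y^2+\gamma_{ij}\mu_h^2\bigr)W_{ij}=\gamma_{ij}F(x_i,y_j),\qquad W\equiv0\text{ on the boundary grid}. \]
By uniqueness of both solutions, the 3D nodal error coincides with $E_{ij}\sin(\pi z_k)$, where $E_{ij}:=V(x_i,y_j)-W_{ij}$.

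A direct calculation, using $-\Delta V+\pi^2 V = F$, yields the 2D truncation
\[ \tau_{ij}:=\bigl(-\delta_x^2-\delta_y^2+\gamma_{ij}\mu_h^2\bigr)V(x_i,y_j)-\gamma_{ij}F(x_i,y_j)=(1-\gamma_{ij})(F-\pi^2V)(x_i,y_j)+\rho_{ij}, \]
where $\rho_{ij}$ is the smooth consistency residual of the standard 2D finite-difference approximation of $-\Delta+\pi^2 I$. Since $\gamma_{ij}\ne1$ only at the central node $P_c:=(N/2,N/2)$, the truncation is smooth everywhere except for an isolated non-small point contribution $\kappa:=\tfrac13\bigl(F(\tfrac12,\tfrac12)-\pi^2V(\tfrac12,\tfrac12)\bigr)$ at $P_c$. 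I would verify $\kappa>0$ by applying the strong maximum principle to $F(\tfrac12,\tfrac12)/\pi^2-V$, which is a non-negative supersolution of $-\Delta+\pi^2 I$ with strictly positive boundary trace and therefore strictly positive in the interior.

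Let $G_h(P,Q)$ be the discrete Green's function of $L_h:=-\delta_x^2-\delta_y^2+\gamma_{ij}\mu_h^2$ with zero Dirichlet data, normalised so that $E_P=\sum_Q h^2 G_h(P,Q)\tau_Q$. The classical 2D discrete Green's function theory (the zero-order term being a bounded positive perturbation) provides the sharp diagonal lower bound $G_h(P_c,P_c)\ge c_1|\ln h|$ for some $c_1>0$. Splitting
\[ E_{P_c}=h^2 G_h(P_c,P_c)\,\kappa+\sum_Q h^2 G_h(P_c,Q)\,\rho_Q, \]
the leading term is bounded below by $c_1\kappa h^2|\ln h|$, while the remainder is the nodal error of the unmodified 2D FDM on the smooth datum $V$ and can be shown to be $O(h^2)=o(h^2|\ln h|)$. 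Hence $|E_{P_c}|\ge\tfrac12 c_1\kappa h^2|\ln h|$ for small $h$, and $\|u-u_h\|_{L_\infty(\Omega)}\ge|E_{P_c}\sin(\pi/2)|=|E_{P_c}|$ gives the claim with $C^*:=\tfrac12 c_1\kappa$. The main obstacle will be to rigorously establish the remainder estimate $\sum_Q h^2 G_h(P_c,Q)\rho_Q=O(h^2)$: with only $V\in C^{2,\alpha}$, the crude bound $|\rho_Q|\lesssim h^\alpha$ combined with the uniform bound $\sum_Q h^2 G_h(P_c,Q)\lesssim1$ yields merely $O(h^\alpha)$, too weak to isolate the leading $h^2|\ln h|$; circumventing this requires writing $\rho$ in discrete-divergence form and summing by parts against $G_h$, as in the planar treatment of \cite{Andreev89_proc}.
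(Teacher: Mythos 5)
Your overall strategy coincides with the paper's: separate variables to reduce to a two--dimensional scheme, observe that the only non-standard feature is the defect $(1-\gamma_{ij})\widetilde F$ concentrated at the central node (with $\widetilde F=F-\pi^2 w$), check $\widetilde F(\frac12,\frac12)>0$ by a maximum-principle comparison, and extract the factor $|\ln h|$ from the logarithmic diagonal growth of the two--dimensional discrete Green's function (Laasonen/Andreev). Your exact discrete separation with the eigenvalue $\mu_h^2=\frac{4}{h^2}\sin^2(\frac12\pi h)$ is a small, legitimate variant of the paper's Lemma~\ref{lem2}, which instead compares $U_{ijk}$ with $W_{ij}\sin(\pi z_k)$ for the scheme written with $\pi^2$ and absorbs the discrepancy into $O(h^2)$ by the discrete maximum principle; and your qualitative positivity of $\kappa$ replaces the paper's quantitative bound \eqref{tilde_F_bound}. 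None of that is a problem, since $C^*$ may depend on $F$.

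The genuine gap is exactly the step you flag at the end and leave unexecuted: showing that the smooth part of the consistency error contributes only $O(h^2)$, i.e.\ the analogue of the bound $|\mathring{W}_{ij}-w(x_i,y_j)|\lesssim h^2$ in Remark~\ref{rem_new}. With only $w\in C^{2,\alpha}$ your estimate $|\rho_Q|\lesssim h^\alpha$ gives $O(h^\alpha)$, which swamps $h^2|\ln h|$, and the proposed remedy (rewriting $\rho$ in discrete-divergence form and summing by parts against $G_h$) is not carried out and is not obviously capable of producing $O(h^2)$ from $C^{2,\alpha}$ regularity alone. The paper closes this gap by a different, and essential, ingredient: by Volkov's regularity theory on the square (Lemma~\ref{lem1} and Remark~\ref{rem_volkov}), the hypotheses $F\in C^{2,\alpha}$ and $F=0$ at the corners imply that the \emph{pure} fourth derivatives $\pt_x^4 w$ and $\pt_y^4 w$ are bounded, even though $w\notin C^{4,\alpha}$ in general; since the truncation error of the five-point operator $-(\delta_x^2+\delta_y^2)$ involves only these pure fourth derivatives, the standard second-order finite-difference consistency and error bound apply, and the remainder is $O(h^2)$ pointwise. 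Without invoking this (or some substitute argument of equal strength), your proof of the lower bound $|E_{P_c}|\gtrsim h^2|\ln h|$ does not go through; this regularity observation is also the reason the theorem's hypotheses on $F$ are stated the way they are.
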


\begin{corollary}\label{cor}
The result of Theorem~\ref{theo_main} remains valid for a version of \eqref{fem} without quadrature.
\end{corollary}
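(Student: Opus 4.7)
The strategy is to compare $\tilde u_h$ (the non-quadrature solution) with the lumped-mass solution $u_h$ of Theorem~\ref{theo_main}, by proving
\[
\|u_h-\tilde u_h\|_{L_\infty(\Omega)}\lesssim h^2
\]
(without any logarithmic factor), and then invoking the triangle inequality
\[
\|u-\tilde u_h\|_{L_\infty(\Omega)}\ge\|u-u_h\|_{L_\infty(\Omega)}-\|u_h-\tilde u_h\|_{L_\infty(\Omega)}\ge C^*h^2|\ln h|-Ch^2\ge\tfrac12 C^*h^2|\ln h|
\]
for all sufficiently small $h$, yielding the claim with constant $C^*/2$.

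For the comparison estimate, observe that $\tilde u_h-u_h\in S_h$ solves the same Galerkin problem as the lumped scheme, but with right-hand side equal to the consistency defect $\int_\Omega(fv_h-(fv_h)^I)$ of the nodal quadrature. Repeating the assembly from \S\ref{sec_1} (only the right-hand side changes), the nodal values $e_{ijk}:=U_{ijk}-\tilde U_{ijk}$ satisfy
\[
\LL^h e_{ijk}=\tau_{ijk},\qquad \tau_{ijk}:=\frac{1}{\gamma_{ij}h^3}\int_\Omega\bigl((f\phi_{ijk})^I-f\phi_{ijk}\bigr),
\]
subject to $e_{ijk}=0$ on $\partial\Omega$. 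The crucial step is to show $|\tau_{ijk}|\lesssim h^2\|F\|_{C^2}$. Since $(f\phi_{ijk})^I=f(P_{ijk})\phi_{ijk}$ and $\int_\Omega\phi_{ijk}=\gamma_{ij}h^3$, a Taylor expansion of $f$ about $P_{ijk}$ gives
\[
\gamma_{ij}h^3\,\tau_{ijk}=-\nabla f(P_{ijk})\cdot\int_\Omega(x-P_{ijk})\phi_{ijk}(x)\,dx+O(h^5\|F\|_{C^2}),
\]
so it suffices to verify that the first moment $\int(x-P_{ijk})\phi_{ijk}\,dx$ vanishes at every interior node. This is a finite mesh-geometry check: the planar triangulation in Fig.~\ref{fig1} is invariant under $180^\circ$ rotation about each interior 2D node (including the exceptional centre $(\tfrac12,\tfrac12)$, whose fourfold symmetry implies central symmetry), and the prism-splitting can be chosen so that the partition in $[z_{k-1},z_k]$ is the $z\mapsto2z_k-z$ mirror image of the partition in $[z_k,z_{k+1}]$; the 3D mesh is then point-symmetric about every $P_{ijk}$, and the first moment vanishes.

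Given $\|\tau\|_\infty\lesssim h^2$, $L_\infty$-stability of $\LL^h$ completes the argument. The stencil of $\LL^h$ is a strictly diagonally dominant M-matrix uniformly in $\gamma_{ij}\in\{\tfrac23,1\}$, so the discrete maximum principle applies. The barrier $w(x,y,z):=\tfrac18\bigl(x(1-x)+y(1-y)+z(1-z)\bigr)$ satisfies $\LL^h w=(2+\gamma_{ij})/4\ge\tfrac23$ and $\|w\|_\infty\le\tfrac3{32}$, which via the standard comparison argument gives $\|e_h\|_{L_\infty(\Omega)}\le C\|\tau\|_\infty\lesssim h^2$.

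The main obstacle is the vanishing of the nodal first moments, i.e., the central symmetry of the 3D mesh about every interior vertex; this requires a compatible choice of the prism-partitioning pattern across adjacent $z$-layers, but once ensured, every other step is routine.
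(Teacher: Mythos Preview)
Your overall strategy---bound $\|u_h-\tilde u_h\|_{L_\infty}$ and combine with Theorem~\ref{theo_main} via the triangle inequality---is exactly what the paper does.  The paper, however, does not attempt a self-contained $O(h^2)$ comparison; it simply quotes the estimate $|u_h-\bar u_h|\lesssim h^2|\ln h|^{2/3}\|f\|_{W^2_\infty(\Omega)}$ from \cite[Appendix~A]{kopteva_frac2d}, which is $o(h^2|\ln h|)$ and therefore already sufficient.

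Your direct argument for $\|\tau\|_\infty\lesssim h^2$ has a genuine gap: the planar triangulation of Fig.~\ref{fig1} is \emph{not} invariant under $180^\circ$ rotation about every interior node.  Central symmetry holds at nodes strictly inside one of the four quadrants (standard six-triangle hexagon) and at the centre $(\tfrac12,\tfrac12)$, but it fails along the ``seams'' $x=\tfrac12$ and $y=\tfrac12$.  Take, for $N=6$, the node $(\tfrac12,\tfrac16)$: the diagonals have slope $-1$ to its left and slope $+1$ to its right, so the patch is only mirror-symmetric about the vertical line $x=\tfrac12$.  Four of its six triangles lie above and two below, and a direct computation of $\sum_T\frac{|T|}{12}(y_{j_1}+y_{j_2}-2y_j)$ gives a nonzero $y$-moment.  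No choice of $z$-layer mirroring can repair this: point symmetry of the tetrahedral mesh about $P_{ijk}$ would force point symmetry of its $(x,y)$-projection about $(x_i,y_j)$, which we have just seen fails.  Consequently your Taylor argument yields only $\tau_{ijk}=O(h)$ at the $O(N^2)$ seam nodes, and the barrier bound collapses to $\|e\|_\infty\lesssim h$.

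It may be possible to rescue the $O(h^2)$ conclusion by a discrete Green's-function argument that exploits the codimension-one location of the bad nodes (in three dimensions $\sum_{q\in\text{plane}}G_{pq}h^3\lesssim h$), but that is substantially more work than you have indicated, and the paper's one-line citation sidesteps the issue entirely.
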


\begin{proof}
Let $\bar u_h$ be the finite element solution obtained using linear finite elements without quadrature.
Then $|u_h-\bar u_h|\lesssim h^2|\ln h|^{2/3}\|f\|_{W^2_\infty(\Omega)}$; see \cite[final 3 lines in Appendix~A]{kopteva_frac2d}.
The desired assertion follows.
\end{proof}%

\begin{remark}[Additional solution smoothness does not
improve the accuracy]
Under the conditions of Theorem~\ref{theo_main}, for any $m\in\N$, one can choose
$F\in C^{2m,\alpha}([0,1]^2)$ subject to $F=0$ in small neighbourhoods of the corners of $(0,1)^2$.
Then  the theorem remains valid, while now
$u\in C^{2m+2,\alpha}(\bar\Omega)$
(as an inspection of the proof of this theorem reveals; see, in particular, Lemma~\ref{lem1} and Remark~\ref{rem_volkov}).
Thus, additional smoothness of the exact solution would not improve
the accuracy of the finite element method.
\end{remark}

\begin{figure}[t!]
\centering
~\hfill
\begin{tikzpicture}[scale=0.25]

\path[draw, fill=gray!30] (0,0)--(0,2)--(2,0) --cycle;
\path[draw, fill=gray!30] (2,0)--(2,2)--(4,0) --cycle;
\path[draw, fill=gray!30] (4,0)--(4,2)--(6,0) --cycle;
\path[draw, fill=gray!30] (0,2)--(0,4)--(2,2) --cycle;
\path[draw, fill=gray!30] (2,2)--(2,4)--(4,2) --cycle;
\path[draw, fill=gray!30] (4,2)--(4,4)--(6,2) --cycle;
\path[draw, fill=gray!30] (0,4)--(0,6)--(2,4) --cycle;
\path[draw, fill=gray!30] (2,4)--(2,6)--(4,4) --cycle;
\path[draw, fill=gray!30] (4,4)--(4,6)--(6,4) --cycle;

\path[draw, fill=gray!30] (6,6)--(8,6)--(6,4) --cycle;
\path[draw, fill=gray!30] (8,6)--(10,6)--(8,4) --cycle;
\path[draw, fill=gray!30] (10,6)--(12,6)--(10,4) --cycle;
\path[draw, fill=gray!30] (6,4)--(8,4)--(6,2) --cycle;
\path[draw, fill=gray!30] (8,4)--(10,4)--(8,2) --cycle;
\path[draw, fill=gray!30] (10,4)--(12,4)--(10,2) --cycle;
\path[draw, fill=gray!30] (6,2)--(8,2)--(6,0) --cycle;
\path[draw, fill=gray!30] (8,2)--(10,2)--(8,0) --cycle;
\path[draw, fill=gray!30] (10,2)--(12,2)--(10,0) --cycle;

\path[draw, fill=gray!30] (6,6)--(6,8)--(4,6) --cycle;
\path[draw, fill=gray!30] (4,6)--(4,8)--(2,6) --cycle;
\path[draw, fill=gray!30] (2,6)--(2,8)--(0,6) --cycle;
\path[draw, fill=gray!30] (6,8)--(6,10)--(4,8) --cycle;
\path[draw, fill=gray!30] (4,8)--(4,10)--(2,8) --cycle;
\path[draw, fill=gray!30] (2,8)--(2,10)--(0,8) --cycle;
\path[draw, fill=gray!30] (6,10)--(6,12)--(4,10) --cycle;
\path[draw, fill=gray!30] (4,10)--(4,12)--(2,10) --cycle;
\path[draw, fill=gray!30] (2,10)--(2,12)--(0,10) --cycle;

\path[draw, fill=gray!30] (12,12)--(12,10)--(10,12) --cycle;
\path[draw, fill=gray!30] (10,12)--(10,10)--(8,12) --cycle;
\path[draw, fill=gray!30] (8,12)--(8,10)--(6,12) --cycle;
\path[draw, fill=gray!30] (12,10)--(12,8)--(10,10) --cycle;
\path[draw, fill=gray!30] (10,10)--(10,8)--(8,10) --cycle;
\path[draw, fill=gray!30] (8,10)--(8,8)--(6,10) --cycle;
\path[draw, fill=gray!30] (12,8)--(12,6)--(10,8) --cycle;
\path[draw, fill=gray!30] (10,8)--(10,6)--(8,8) --cycle;
\path[draw, fill=gray!30] (8,8)--(8,6)--(6,8) --cycle;

\path[draw,->,help lines] (-0.5,0) -- (14,0) node[below] {\color{black}$x$};
\draw[->,help lines] (0,-0.5) -- (0,14) node[left] {\color{black}$y$};

\path[draw]  (0,0)node[below left]{$0$}--(0,12)node[left]{$1$}--(12,12)--(12,0)node[below]{$1$}--cycle;

\path[draw]  (6,0)node[below ]{$\frac12$}--(12,6)--(6,12)--(0,6)node[left]{$\frac12$}--cycle;
\path[draw]  (6,4)--(8,6)--(6,8)--(4,6)--cycle;
\path[draw]  (6,2)--(10,6)--(6,10)--(2,6)--cycle;

\path[draw]  (0,4)--(4,0);
\path[draw]  (0,2)--(2,0);
\path[draw]  (12,8)--(8,12);
\path[draw]  (12,10)--(10,12);
\path[draw]  (8,0)--(12,4);
\path[draw]  (10,0)--(12,2);
\path[draw]  (0,8)--(4,12);
\path[draw]  (0,10)--(2,12);

\path[draw]  (2,0)--(2,12);
\path[draw]  (4,0)--(4,12);
\path[draw]  (6,0)--(6,12);
\path[draw]  (8,0)--(8,12);
\path[draw]  (10,0)--(10,12);

\path[draw]  (0,2)--(12,2);
\path[draw]  (0,4)--(12,4);
\path[draw]  (0,6)--(12,6);
\path[draw]  (0,8)--(12,8);
\path[draw]  (0,10)--(12,10);

\end{tikzpicture}
\hfill
\begin{tikzpicture}[scale=0.22]

\draw[->,help lines] (3,3) -- (-2.5,-2.5) node[right] {\color{black}$x$};
\draw[->,help lines] (3,3) -- (15,3) node[below] {\color{black}$y$};
\draw[->,help lines] (3,3) -- (3,15) node[left] {\color{black}$z$};

\path[draw]  (0,0)node[draw,shape=circle,left,inner sep=1pt] {\small$3'$}--(12,3)node[draw,shape=circle,below,inner sep=1pt] {\small$1'$}--
(12,12)node[draw,shape=circle,right,inner sep=2pt] {\small$1$}--
(0,9)node[draw,shape=circle,left,inner sep=2pt] {\small$3$}--cycle;
\path[draw, dashed]  (3,12)--(3,3)--(12,3);
\path[draw]  (12,3)--(12,12)--(3,12);
\path[draw, dashed]  (0,0)--(3,3);
\node[draw, shape=circle,above right,inner sep=1pt] at (3,3) {\small$2'$};
\node[draw, shape=circle,above right,inner sep=2pt] at (3,12) {\small$2$};
\path[draw]  (0,9)--(3,12);

\path[draw] (0,9)--(12,3);
\path[draw, dashed] (12,3)--(3,12);
\path[draw, dashed] (0,9)--(3,3);

\end{tikzpicture}
\hfill
\begin{tikzpicture}[scale=0.22]
\path[draw, fill=gray!30] (0,9)--(9,9)--(12,12) --cycle;

\path[draw]  (0,0)--(9,0)--(9,9)--(0,9)--cycle;
\path[draw]  (0,0)--(0,9)--(12,12)--(12,3);
\path[draw, dashed]  (0,0)--(12,3);
\path[draw]  (9,9)--(12,12);
\path[draw]  (9,0)--(12,3);

\path[draw] (9,0)--(0,9);
\path[draw] (12,3)--(9,9);
\path[draw, dashed] (0,9)--(12,3);

\node[draw, shape=circle,left,inner sep=1pt] at (0,9) {\small$1'$};
\node[draw, shape=circle,left,inner sep=2pt] at (0,0) {\small$1$};
\node[draw, shape=circle,right,inner sep=1pt] at (12,12) {\small$3'$};
\node[draw, shape=circle,right,inner sep=2pt] at (12,3) {\small$3$};
\node[draw, shape=circle,right,inner sep=1pt] at (9,9) {\small$2'$};
\node[draw, shape=circle,below,inner sep=2pt] at (9,0) {\small$2$};
\end{tikzpicture}
\hfill~\vspace{-0.2cm}
\caption{Two-dimensional triangulation in $(0,1)^2$ for $N=6$ (left);
partition of a triangular prism into 3 tetrahedra using method A (centre) and method B (right).}
\label{fig1}
\end{figure}
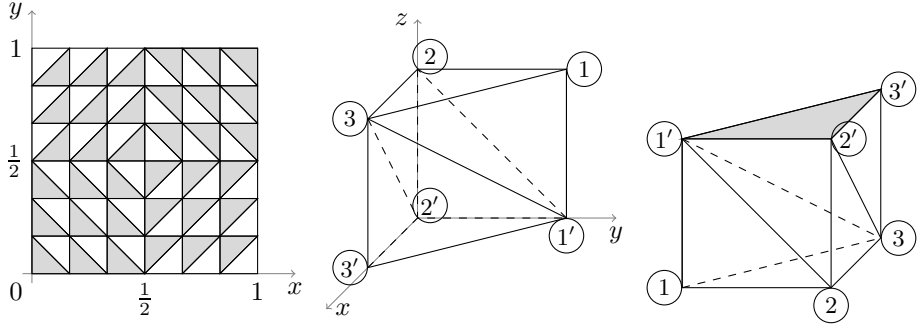

\section{Proof of Theorem~\ref{theo_main}}\label{sec_2}

We split the proof into a number of lemmas, which involve an auxiliary function $w(x,y)$, as well as its finite difference approximation $W_{ij}$, defined by
\begin{align}\label{w_prob}
\MM w&:=-(\pt_x^2+\pt_y^2)w+\pi^2 w=F\quad\mbox{in\;\;}(0,1)^2,
\\[2pt]\label{w_fdm}
\MM^h W_{ij}&:=-(\delta_x^2+\delta_y^2)W_{ij}+\gamma_{ij}\,\pi^2 W_{ij}=\gamma_{ij}\,F(x_i,y_j),
\end{align}
the latter for $i,j=1,\ldots,N-1$,
subject to $w=0$ and $W_{ij}=0$ on the boundary of~$(0,1)^2$.

\begin{lemma}\label{lem1}
Under the conditions of Theorem~\ref{theo_main} on $f$,
the solution of problem~\eqref{prob} is $u=w(x,y)\sin(\pi z)$, where $w\in C^{2,\alpha}([0,1]^2)$ is a unique solution
of \eqref{w_prob} 
and
$|\pt_x^4w|+|\pt_y^4w|\lesssim 1$ in $[0,1]^2$.
Furthermore,
\beq\label{tilde_F_bound}
\textstyle
\widetilde F(\frac12,\frac12)\ge \|F\|_\infty/{\cosh(\frac12\pi)},\qquad
\mbox{where}\;\; \widetilde F:=F-\pi^2 w.
\eeq
\end{lemma}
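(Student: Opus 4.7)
The plan is to establish the three assertions of the lemma in sequence.

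\emph{Separation of variables.} I would substitute the ansatz $u=w(x,y)\sin(\pi z)$ into $-\triangle u=f$ and use $\pt_z^2\sin(\pi z)=-\pi^2\sin(\pi z)$ to reduce the equation to $[\MM w](x,y)\,\sin(\pi z)=F(x,y)\sin(\pi z)$, i.e., $\MM w=F$ in $(0,1)^2$. The homogeneous Dirichlet condition on $u$ is automatic on the faces $z\in\{0,1\}$ and forces $w=0$ on $\pt(0,1)^2$ on the lateral faces, matching \eqref{w_prob}.

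\emph{Existence, uniqueness, and regularity of $w$.} Existence and uniqueness in $H^1_0((0,1)^2)$ follow from Lax--Milgram applied to the coercive bilinear form $a(w,v)=\int(\nabla w\cdot\nabla v+\pi^2 wv)$. To upgrade to $w\in C^{2,\alpha}([0,1]^2)$, I would invoke the Volkov-type corner regularity theorem cited in Remark~\ref{rem_volkov}: since $F\in C^{2,\alpha}$ and $F$ vanishes at each corner of $(0,1)^2$, the required compatibility conditions at the right-angled corners are met. For the fourth-derivative bound, I would differentiate $\MM w=F$ twice in $x$, so that $v:=\pt_x^2 w$ satisfies a Helmholtz-type problem on the same square with right-hand side $\pt_x^2 F\in C^{0,\alpha}$ and boundary values (obtained from restricting $F$ and from the equation $\MM w=F$ on the sides) that still vanish at the corners; the Volkov result applied to this auxiliary problem gives $\pt_x^2 w\in C^{2,\alpha}$. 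The algebraic identity $\pt_x^4 w=\pi^2\pt_x^2 w-\pt_x^2\pt_y^2 w-\pt_x^2 F$ then yields $|\pt_x^4 w|\lesssim 1$, and the bound on $\pt_y^4 w$ is symmetric.

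\emph{Proof of \eqref{tilde_F_bound}.} The appearance of $\cosh(\pi/2)$ suggests a one-dimensional barrier. Define
\[
W(x):=\frac{\|F\|_\infty}{\pi^2}\Bigl[1-\frac{\cosh(\pi(x-\tfrac12))}{\cosh(\pi/2)}\Bigr],
\]
which satisfies $-W''+\pi^2 W=\|F\|_\infty$ on $(0,1)$ with $W(0)=W(1)=0$. Viewed as $y$-independent, the function $\bar w(x,y):=W(x)$ fulfils $\MM\bar w=\|F\|_\infty\ge F$ in $(0,1)^2$ and $\bar w\ge 0$ on $\pt(0,1)^2$. The maximum principle for $\MM$ (whose zeroth-order coefficient $\pi^2$ is positive) then gives $w\le\bar w$, hence $w(\tfrac12,\tfrac12)\le W(\tfrac12)=\|F\|_\infty\pi^{-2}\bigl(1-1/\cosh(\pi/2)\bigr)$. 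Combining with $F(\tfrac12,\tfrac12)=\|F\|_\infty$ and $\widetilde F=F-\pi^2 w$ produces the claimed inequality.

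The main obstacle is the regularity statement. The $H^1_0$ theory and the barrier computation are routine, but the $C^{2,\alpha}$ bound up to the boundary of a square, and especially the pointwise bound on fourth derivatives, hinges on a careful corner analysis. The assumption $F=0$ at each corner has been tailored precisely so that the Volkov-style compatibility conditions hold both for $F$ and for the data of the twice-differentiated problem for $\pt_x^2 w$; granting this reference, the remaining steps go through without difficulty.
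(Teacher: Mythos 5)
Your separation of variables, the Lax--Milgram existence/uniqueness, the $C^{2,\alpha}([0,1]^2)$ regularity of $w$ (for which the only corner compatibility needed is $F=0$ at the corners), and the barrier argument for \eqref{tilde_F_bound} are all correct; the barrier is in fact the same as the paper's, since $\|F\|_\infty-\pi^2\bar w(x,y)$ is exactly the comparison function $\|F\|_\infty\cosh(\pi(x-\frac12))/\cosh(\frac12\pi)$ used there.

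The genuine gap is in your proof of the fourth-derivative bound. The step ``the Volkov result applied to this auxiliary problem gives $\pt_x^2 w\in C^{2,\alpha}$'' does not follow from the stated hypotheses and is false in general. For the auxiliary problem $\MM v=\pt_x^2F$ with $v=-F$ on the sides $x\in\{0,1\}$ and $v=0$ on $y\in\{0,1\}$, $C^{2,\alpha}$ regularity up to a corner requires not merely that the boundary data vanish there (that is only the zeroth-order matching), but the second-order compatibility condition linking the tangential second derivatives of the data with the right-hand side at the corner; here it reads $\pt_x^2F=\pt_y^2F$ at each corner, which is not assumed. Equivalently, $w\in C^4([0,1]^2)$ needs exactly these extra corner conditions --- this is the point of Remark~\ref{rem_volkov}. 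A local model shows the failure: if $F$ behaves like $x^2-y^2$ near the corner $z=0$ (with $z=x+iy$), the solution of \eqref{w_prob} contains a term proportional to $\mathrm{Im}(z^4\log z)$, for which $\pt_x^3\pt_y w\sim\ln r$ is unbounded and $\pt_x^2\pt_y^2 w$ is bounded but discontinuous, so $\pt_x^2w\notin C^2$ up to that corner. Consequently your concluding identity, which needs a pointwise bound on the mixed derivative $\pt_x^2\pt_y^2w$, rests on an unproved claim that is essentially as deep as the statement being proved. The paper sidesteps all of this by quoting Volkov's Remark~4 in \S8 directly, which yields precisely the boundedness of the pure derivatives $\pt_x^4w$, $\pt_y^4w$ under the weaker hypotheses ($F\in C^{2,\alpha}$, $F=0$ at the corners); the lemma, and the later finite-difference truncation analysis, are deliberately phrased so that only these pure fourth derivatives are needed. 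To repair your argument you would either have to assume the additional corner conditions $\pt_x^2F=\pt_y^2F$ (strengthening the hypotheses, which the paper explicitly avoids) or replace your bootstrap by the cited Volkov estimate.
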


\begin{proof}
The regularity of $w$ and the bounds on its pure fourth partial derivatives follow from
\cite[Remark~4 in~\S8]{volkov}.
For \eqref{tilde_F_bound}, in view of $F(\frac12,\frac12)=\|F\|_\infty$, it suffices to show that
$$
\|F\|_\infty-\pi^2 w(x,y)\ge B(x):=\|F\|_\infty\frac{\cosh(\pi(x-\frac12))}{\cosh(\frac12\pi)}\,.
$$
The latter is obtained by an application of the maximum principle for the operator $\MM$
as $\MM(\|F\|_\infty-\pi^2 w)\ge 0 = \MM B$, while $\|F\|_\infty-\pi^2 w=\|F\|_\infty\ge B$ on $\pt\Omega$.
\end{proof}

\begin{remark}\label{rem_volkov} In Lemma~\ref{lem1}, we have $w\in C^{2,\alpha}([0,1]^2)$ rather than $w\in C^{4,\alpha}([0,1]^2)$,
as  the latter requires additional corner compatibility conditions on $F$ \cite[Theorem~3.1]{volkov},
while bounded fourth pure partial derivatives of $w$ are sufficient for the finite-difference-flavoured  analysis that yields the crucial bound \eqref{iterm} below.
\end{remark}

\begin{lemma}\label{lem2}
For $U_{ijk}$ of \eqref{fdm_full} and $W_{ij}$ of \eqref{w_fdm} one has
$|U_{ijk}- W_{ij}\sin(\pi z_k)|\lesssim h^2$.
\end{lemma}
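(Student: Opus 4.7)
My plan is to exploit the tensor-product structure of the problem. Since $f=F(x,y)\sin(\pi z)$, a natural candidate for approximating $U_{ijk}$ is the separable lattice function $V_{ijk}:=W_{ij}\sin(\pi z_k)$, and I would establish the bound by computing the $\LL^h$-residual of $V_{ijk}$ and invoking the discrete maximum principle. Note that $V$ shares the homogeneous Dirichlet boundary values of $U$: on $(x_i,y_j)\in\pt(0,1)^2$ one has $W_{ij}=0$, while at $z_k\in\{0,1\}$ one has $\sin(\pi z_k)=0$.

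The key ingredient is the elementary identity
\[
\delta_z^2\sin(\pi z_k)=-\mu_h\sin(\pi z_k),\qquad \mu_h:=\frac{2(1-\cos\pi h)}{h^2}=\pi^2+O(h^2),
\]
obtained from basic trigonometry and a Taylor expansion. Substituting this into the definition of $\LL^h$ and using \eqref{w_fdm} for $W_{ij}$, a short computation yields
\[
\LL^h V_{ijk}=\sin(\pi z_k)\bigl[\MM^h W_{ij}+\gamma_{ij}(\mu_h-\pi^2)W_{ij}\bigr]=\gamma_{ij}\,f(x_i,y_j,z_k)+\gamma_{ij}(\mu_h-\pi^2)V_{ijk}.
\]
Subtracting $\LL^h U_{ijk}=\gamma_{ij}f(x_i,y_j,z_k)$, the error $e_{ijk}:=U_{ijk}-V_{ijk}$ satisfies
\[
\LL^h e_{ijk}=-\gamma_{ij}(\mu_h-\pi^2)V_{ijk}\qquad\text{with}\qquad e_{ijk}=0\text{ on }\pt\Omega.
\]

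To finish, I need a uniform bound $\|W\|_\infty\lesssim 1$, which follows from the discrete maximum principle for $\MM^h$ applied with the constant barrier function $\|F\|_\infty/\pi^2$; this renders the right-hand side above $O(h^2)$ in absolute value. The desired estimate $|e_{ijk}|\lesssim h^2$ then follows from another application of the discrete maximum principle, now for $\LL^h$, using the standard barrier $\Psi(x):=\tfrac12 x(1-x)$ which satisfies $\LL^h\Psi\equiv 1$. The only point requiring mild care, and the closest thing to an obstacle, is the verification that $\LL^h$ remains an $M$-matrix in the presence of the variable coefficient $\gamma_{ij}\in\{\tfrac23,1\}$; this is immediate from inspection of the stencil, so the overall argument is essentially mechanical.
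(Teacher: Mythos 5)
Your proposal is correct and follows essentially the same route as the paper: the separable comparison function $W_{ij}\sin(\pi z_k)$, the discrete eigenvalue identity $\delta_z^2\sin(\pi z_k)=-\lambda_h\sin(\pi z_k)$ with $\lambda_h=\pi^2+O(h^2)$, the bound $|W_{ij}|\le\pi^{-2}\|F\|_\infty$, and the discrete maximum principle for $\LL^h$. The only difference is that you spell out the barrier $\tfrac12 x(1-x)$ and the $M$-matrix check, which the paper leaves implicit.
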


\begin{proof}
First, note
that
$-\delta_z^2 [\sin(\pi z_k)]=\lambda_h \sin(\pi z_k)$
where $\lambda_h:=\frac{4}{h^2}\sin^2(\frac12\pi h)=\pi^2+O(h^2)$
\cite[\S{}II.3.2]{Sam}.
Combining this with \eqref{fdm} and \eqref{w_fdm} yields
\begin{align*}
\LL^h[W_{ij}\sin(\pi z_k)]&=\bigl[-(\delta_x^2+\delta_y^2) W_{ij} +
\gamma_{ij}\lambda_h W_{ij}\bigr]\sin(\pi z_k)\\[2pt]
&=\underbrace{\gamma_{ij}F(x_i,y_j)\sin(\pi z_k)}_{{}=\LL^h U_{ijk}} + O(h^2),
\end{align*}
where we also used $ |W_{ij}|\le \pi^{-2}\|F\|_{\infty}$.
The desired result follows by an application of the discrete maximum principle for the operator $\LL^h$.
\end{proof}

\begin{lemma}[{\cite{Andreev89_proc}}]\label{lem_andreev}
Let
$w$ solve \eqref{w_prob}, and
$\widetilde W_{ij}$ satisfy $-(\delta_x^2+\delta_y^2)\widetilde W_{ij}=\gamma_{ij}\widetilde F(x_i,y_j)$, subject to $\widetilde W_{ij}=0$ at the boundary nodes.
Then there exists a constant $C_1$, independent of $h$ and $\widetilde F$, such that
\beq\label{iterm}\textstyle
w(\frac12,\frac12)-\widetilde W_{\frac N2,\frac N2}\ge C_1 h^2|\ln h|\,\widetilde F(\frac12,\frac12)-O(h^2).
\eeq
\end{lemma}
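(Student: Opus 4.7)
My plan is to split $\widetilde W$ into the standard five-point Poisson discretization plus a single-node perturbation, and then to extract the logarithm from the associated discrete Green function.

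Let $V_{ij}$ solve the uniform five-point scheme $-(\delta_x^2+\delta_y^2)V_{ij}=\widetilde F(x_i,y_j)$ with $V=0$ on the boundary. Since $\gamma_{ij}=1-\tfrac13 e^c_{ij}$, where $e^c_{ij}$ is the indicator of the centre node $(\tfrac N2,\tfrac N2)$, the difference $V-\widetilde W$ satisfies the same uniform scheme with right-hand side $\tfrac13\widetilde F(\tfrac12,\tfrac12)\,e^c_{ij}$; hence $V_{ij}-\widetilde W_{ij}=\tfrac13\widetilde F(\tfrac12,\tfrac12)\,G_{ij}$, where $G\ge 0$ (by the discrete maximum principle) is the discrete Green function of $-(\delta_x^2+\delta_y^2)$ with source $e^c$ and homogeneous Dirichlet data. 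On the other hand, $w$ satisfies $-(\pt_x^2+\pt_y^2)w=\widetilde F$ with $w=0$ on $\pt(0,1)^2$, and $|\pt_x^4 w|+|\pt_y^4 w|\lesssim 1$ by Lemma~\ref{lem1}, so the classical finite-difference truncation-error argument combined with a parabolic barrier yields $|w(\tfrac12,\tfrac12)-V_{N/2,N/2}|\lesssim h^2$. Combining these,
\[
w(\tfrac12,\tfrac12)-\widetilde W_{N/2,N/2}=[w(\tfrac12,\tfrac12)-V_{N/2,N/2}]+\tfrac13\widetilde F(\tfrac12,\tfrac12)\,G_{N/2,N/2}\ge\tfrac13\widetilde F(\tfrac12,\tfrac12)\,G_{N/2,N/2}-O(h^2),
\]
which reduces the lemma to the one-sided bound $G_{N/2,N/2}\ge C\,h^2|\ln h|$.

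To establish this bound I would expand $G$ in the discrete sine eigenbasis $\phi_{k,l}(x,y)=2\sin(k\pi x)\sin(l\pi y)$ of $-(\delta_x^2+\delta_y^2)$ on the grid in $(0,1)^2$, whose eigenvalues are $\mu_{k,l}=\tfrac{4}{h^2}[\sin^2(\tfrac{k\pi h}{2})+\sin^2(\tfrac{l\pi h}{2})]$ for $k,l=1,\ldots,N-1$. Since $\phi_{k,l}(\tfrac12,\tfrac12)^2=4$ when $k$ and $l$ are both odd and vanishes otherwise,
\[
G_{N/2,N/2}=h^4\!\!\sum_{\substack{1\le k,l\le N-1\\k,\,l\text{ odd}}}\!\frac{1}{\sin^2(\tfrac{k\pi h}{2})+\sin^2(\tfrac{l\pi h}{2})}.
\]
Since $\sin\theta\le\theta$ on $[0,\pi/2]$, each summand is bounded below by $\tfrac{4}{\pi^2h^2(k^2+l^2)}$, and an integral comparison of the resulting positive sum over odd $(k,l)$ inside the quarter-disk $\{k^2+l^2\le N^2\}$ with $\int\!\!\int r^{-2}\,r\,dr\,d\theta\sim\tfrac{\pi}{2}|\ln h|$ yields $G_{N/2,N/2}\ge C_1\,h^2|\ln h|$ with $C_1>0$ independent of $h$.

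The main obstacle is this last step: isolating the low-frequency logarithmic contribution to the Fourier sum cleanly and uniformly in $h$. Positivity of all summands rules out cancellation, which is why this route is so much simpler than, say, bounding differences of discrete Green functions. A geometric alternative would be to build a discrete barrier modelled on the continuous fundamental solution $-\tfrac1{2\pi}\log r$ above the mesh scale, but the eigenfunction expansion is cleanest because the centre node is precisely where every odd sine mode attains unit magnitude. All other ingredients---the single-node-perturbation decomposition and the $O(h^2)$ truncation-error estimate for $V$---are entirely routine.
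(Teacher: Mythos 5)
Your argument is correct, and its first half coincides with the paper's route (spelled out in Remark~\ref{rem_new}, following Andreev): writing $\gamma_{ij}=1-\frac13 e^c_{ij}$, you recover exactly the decomposition $\widetilde W_{ij}=\mathring{W}_{ij}-\frac13\widetilde F(\frac12,\frac12)G_{ij}$ (your $V$ is the paper's $\mathring W$; your $G$, normalized with unit source rather than source $h^{-2}$, is $h^2$ times the paper's), together with the standard $O(h^2)$ five-point error bound for $\mathring W$, which is legitimate here because Lemma~\ref{lem1} supplies the bounded pure fourth derivatives of $w$. Where you genuinely differ is in the key ingredient $G_{\frac N2,\frac N2}\gtrsim h^2|\ln h|$: the paper simply cites Laasonen \cite{laasonen58} (or Scott \cite{scott76}) for the logarithmic growth of the discrete Green's function, whereas you derive it by expanding $G$ in the discrete sine eigenbasis, observing that only odd--odd modes contribute at the centre node with $\phi_{k,l}(\frac12,\frac12)^2=4$, bounding $\sin\theta\le\theta$, and comparing the resulting positive sum $\sum_{k,l\ \mathrm{odd}}(k^2+l^2)^{-1}$ with a logarithmically divergent integral. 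This computation is standard and correct (the exact constant in your integral comparison is off by the factor coming from the odd-only restriction, but only positivity of $C_1$ matters), so your version is more self-contained and elementary, at the cost of a page of Fourier bookkeeping that the citation avoids. Two small points worth making explicit: the final passage from $G_{\frac N2,\frac N2}\ge C h^2|\ln h|$ to \eqref{iterm} uses $\widetilde F(\frac12,\frac12)\ge 0$, which under the hypotheses of Theorem~\ref{theo_main} is guaranteed by \eqref{tilde_F_bound} (the paper's argument uses this implicitly as well); and your constant $C_1$ is indeed independent of $\widetilde F$, since it comes solely from the eigenvalue sum, while the $O(h^2)$ term absorbs the dependence on $w$ --- exactly as in the paper.
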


\begin{proof}
Recalling that
 $\widetilde F =F-\pi^2 w$, rewrite \eqref{w_prob} as
 $-(\pt_x^2+\pt_y^2)w=\widetilde F$.
Now $\widetilde W_{ij}$ may be considered a finite difference approximation of $w$, for which
\eqref{iterm}
is obtained in \cite{Andreev89_proc}.
Note that $C_1$  is independent of $h$ and $\widetilde F$  (as it is related to the discrete Green's function
for the operator $-(\delta_x^2+\delta_y^2)$; see also Remark~\ref{rem_new}).
\end{proof}

\begin{remark}[$C_1$ in \eqref{iterm} {\cite{Andreev89_proc}}]\label{rem_new}
It is noted in \cite{Andreev89_proc} that $\widetilde W_{ij}$ of Lemma~\ref{lem_andreev} allows the representation
$\widetilde W_{ij}=\mathring{W}_{ij}-\frac13 h^2G_{ij}\,\widetilde F(\frac12,\frac12)$,
where $-(\delta_x^2+\delta_y^2)\mathring{W}_{ij}=\widetilde F(x_i,y_j)$, subject to $\mathring{W}_{ij}=0$ at the boundary nodes,
while $G_{ij}$ is the discrete Green's function
for the operator $-(\delta_x^2+\delta_y^2)$ associated with the node $(\frac N2,\frac N2)$.
To be more precise, $-(\delta_x^2+\delta_y^2)G_{ij}$ equals $h^{-2}$ if $i=j=\frac N2$ and $0$ otherwise, subject to $G_{ij}=0$ at the boundary nodes.
Furthermore, there is a constant $C_1>0$ independent of $h$ (as well as 
of $\widetilde F$) such that $G_{\frac N2,\frac N2}\ge 3C_1 |\ln h|$.
(For the latter, Andreev \cite{Andreev89_proc} uses \cite[(16)]{laasonen58};
see also \cite[Lemma~6]{scott76} for a similar result in the finite element context.)
Finally, for 
$\mathring{W}_{ij}$,
one has a standard finite-difference error bound $|\mathring{W}_{ij}-w(x_i,y_j)|\lesssim h^2$.
The above observations yield \eqref{iterm}.
\end{remark}

\begin{remark}
It was also pointed out in \cite{Andreev89_proc} that $\widetilde W_{ij}=w_h(x_i,y_j)$, where $w_h$ is a linear finite element solution for
 $-(\pt_x^2+\pt_y^2)w=\widetilde F$ obtained using the two-dimensional triangulation of $(0,1)^2$ shown on Fig.\,\ref{fig1} (left).
\end{remark}

\begin{lemma}\label{lem3}
Under the conditions of Theorem~\ref{theo_main} on $F$, for the solutions of \eqref{w_prob} and \eqref{w_fdm}
with a sufficiently small $h$,
one has
$
\!\displaystyle\max_{i,j=0,\ldots,N}|w(x_i,y_j)-W_{ij}|
\ge C_0 h^2|\ln h|$, with  a positive constant $C_0$  that depends on $F$, but not on $h$.
\end{lemma}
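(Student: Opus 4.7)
The plan is to transfer the lower bound of Lemma~\ref{lem_andreev} (on $w-\widetilde W$ at the centre) to the error $e_{ij}:=w(x_i,y_j)-W_{ij}$ via a short self-consistency argument. Writing $E_{ij}:=W_{ij}-\widetilde W_{ij}$, the identity
\[
e_{N/2,N/2}=\bigl[w(\tfrac12,\tfrac12)-\widetilde W_{N/2,N/2}\bigr]-E_{N/2,N/2}
\]
will be combined with \eqref{iterm} and an $h$-independent stability bound for $E$.

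First I would derive the equation for $E$. Rewriting \eqref{w_fdm} as $-(\delta_x^2+\delta_y^2)W_{ij}=\gamma_{ij}\bigl(F(x_i,y_j)-\pi^2 W_{ij}\bigr)$ and subtracting the definition of $\widetilde W$ (with $\widetilde F=F-\pi^2 w$) yields the discrete Poisson problem
\[
-(\delta_x^2+\delta_y^2)E_{ij}=\gamma_{ij}\pi^2 e_{ij}\quad\text{on interior nodes},\qquad E_{ij}=0\text{ on }\p(0,1)^2.
\]
A standard barrier argument using $\tfrac12 x(1-x)$, which satisfies $-(\delta_x^2+\delta_y^2)[\tfrac12 x(1-x)]=1$ on the uniform grid, then gives $\|E\|_\infty\le \kappa\|e\|_\infty$ with $\kappa=\tfrac18\pi^2\gamma_{\max}$, independent of $h$. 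By Lemma~\ref{lem_andreev}, $A:=w(\tfrac12,\tfrac12)-\widetilde W_{N/2,N/2}\ge C_1\widetilde F(\tfrac12,\tfrac12)\,h^2|\ln h|-O(h^2)$, and by Lemma~\ref{lem1}, $\widetilde F(\tfrac12,\tfrac12)>0$. Combining,
\[
\|e\|_\infty\ge|e_{N/2,N/2}|=|A-E_{N/2,N/2}|\ge A-\|E\|_\infty\ge A-\kappa\|e\|_\infty,
\]
so $(1+\kappa)\|e\|_\infty\ge A$. For $h$ small enough that the $O(h^2)$ correction in $A$ is absorbed into half of the $C_1 h^2|\ln h|$ term, this yields $\|e\|_\infty\ge C_0\,h^2|\ln h|$ with $C_0=C_1\widetilde F(\tfrac12,\tfrac12)/(2(1+\kappa))$, independent of $h$.

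The main (and essentially only nontrivial) step is the $h$-uniform bound $\|E\|_\infty\le\kappa\|e\|_\infty$, which is the standard maximum-norm stability estimate for the discrete Laplacian on the unit square. The substantive logarithmic lower bound has already been carried out in Lemma~\ref{lem_andreev}, and the positivity of $\widetilde F$ at the centre was secured in Lemma~\ref{lem1}; the self-consistency step here is what marries the two.
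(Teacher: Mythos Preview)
Your proof is correct and follows essentially the same route as the paper: both derive $-(\delta_x^2+\delta_y^2)(W_{ij}-\widetilde W_{ij})=\gamma_{ij}\pi^2\bigl(w(x_i,y_j)-W_{ij}\bigr)$, bound $W-\widetilde W$ by $\tfrac{\pi^2}{8}\|w-W\|_\infty$ via the quadratic barrier $\tfrac12 x(1-x)$, and combine this with \eqref{iterm} and \eqref{tilde_F_bound}. The only cosmetic difference is that the paper packages the last step as a proof by contradiction (assume $\|e\|_\infty<C_0h^2|\ln h|$ and reach $\|e\|_\infty\ge C_0h^2|\ln h|$), whereas you write the equivalent direct self-consistency inequality $(1+\kappa)\|e\|_\infty\ge A$.
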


\begin{proof}
Set $C_0:=4 \pi^{-2} C_1 \|F\|_\infty/{\cosh(\frac12\pi)}$ and
 $e_{ij}:=\widetilde W_{ij}-W_{ij}$, where $C_1$ and $\widetilde W_{ij}$ are from Lemma~\ref{lem_andreev}.
Note that  $-(\delta_x^2+\delta_y^2)e_{ij}=\gamma_{ij}\pi^2[ W_{ij}-w(x_i,y_j)]$
 (in view of $\widetilde F =F-\pi^2 w$).
 Also, for the auxiliary $B_{ij}:=\frac12\pi^2  C_0 h^2|\ln h|\bigl\{\frac14-(x_i-\frac12)^2\bigl\}$,
 note that $-(\delta_x^2+\delta_y^2)B_{ij}=\pi^2  C_0 h^2|\ln h|$.
 We now prove the desired bound by contradiction.
Assume that
$\max_{i,j}|w(x_i,y_j)-W_{ij}|< C_0 h^2|\ln h|$.
Then
 $-(\delta_x^2+\delta_y^2)[B_{ij}\pm e_{ij}]> 0$.
 Now, an application of
the discrete maximum principle 
yields
$B_{ij}\pm e_{ij}\ge 0$.
So $|e_{ij}|\le B_{ij}$,
so
$|\widetilde W_{ij}-W_{ij}|=|e_{ij}|\le \frac18 \pi^2  C_0 h^2|\ln h|$.
Combining this with~\eqref{iterm}, one concludes that
$$
\textstyle
w(\frac12,\frac12)-W_{\frac N2,\frac N2} \ge \underbrace{\textstyle\bigl\{C_1 \widetilde F(\frac12,\frac12) -\frac18 \pi^2  C_0\bigr\}}_{{}\ge \frac18 \pi^2  C_0}
h^2|\ln h|-O(h^2)
\ge C_0 h^2|\ln h|
$$
for   a sufficiently small $h$,
where we also used
$C_1 \widetilde F(\frac12,\frac12)\ge \frac14 \pi^2 C_0$ (in view of~\eqref{tilde_F_bound}).
The above contradicts our assumption that
$\max_{i,j}|w(x_i,y_j)-W_{ij}|< C_0 h^2|\ln h|$.
\end{proof}

\begin{proof}[Proof of Theorem~\ref{theo_main}.]
It now suffices to combine the findings of Lemmas~\ref{lem1},~\ref{lem2} and~\ref{lem3}.
In particular,
$u(x_i,y_j,\frac12)-U_{i,j,\frac N2}=w(x_i,y_j)-W_{i,j}+O(h^2)$, by Lemmas~\ref{lem1} and~\ref{lem2}. So, in view of Lemma~\ref{lem3},
one gets $\|u-u_h\|_{L_\infty(\Omega)}\ge C^* h^2|\ln h|$ with any fixed positive constant $C^*<C_0$.
\end{proof}

\end{document}